\newtheorem{theorem}{Theorem}[section]
\newtheorem{lemma}{Lemma}[section]
\newtheorem{remark}{Remark}[section]
\newtheorem{example}{Example}[section]
\newcommand{\be}{\begin{equation}}
\newcommand{\ee}{\end{equation}}
\newcommand{\bea}{\begin{eqnarray}}
\newcommand{\eea}{\end{eqnarray}}
\newcommand{\beas}{\begin{eqnarray*}}
\newcommand{\eeas}{\end{eqnarray*}}
\begin{document}

\setcounter{page}{1} \setlength{\unitlength}{1mm}\baselineskip
.58cm \pagenumbering{arabic} \numberwithin{equation}{section}

\title[ $f(\mathcal{R})$-gravity ]
{Characterizations of Perfect fluid spacetimes obeying $f(\mathcal{R})$-gravity equipped with different gradient solitons }

\author[  ]
{ Krishnendu De$^{*}$ , Young Jin Suh and Uday Chand De }

\address
 {$^{*}$  Department of Mathematics,
 Kabi Sukanta Mahavidyalaya,
The University of Burdwan.
Bhadreswar, P.O.-Angus, Hooghly,
Pin 712221, West Bengal, India. ORCID iD: https://orcid.org/0000-0001-6520-4520}
\email{krishnendu.de@outlook.in }

\address{ Department of Mathematics\\
 Kyungpook National University\\
Daegu-41566, South Korea}
\email{yjsuh@knu.ac.kr}

\address
{Department of Mathematics, University of Calcutta, West Bengal, India. ORCID iD: https://orcid.org/0000-0002-8990-4609}
\email{uc$_{-}$de@yahoo.com}
\footnotetext {$\bf{2020\ Mathematics\ Subject\ Classification\:}.$ 83C05; 83D05; 53C50.
\\ {Key words: $f(\mathcal{R})$-gravity; Perfect fluids; $\eta$-Ricci solitons; Einstein Solitons;  $m$-quasi Einstein solitons.\\
\thanks{$^{*}$ Corresponding author}
}}
\maketitle

\vspace{1cm}

\begin{abstract}

The prime object of this article is to study the perfect fluid spacetimes obeying $f(\mathcal{R})$-gravity, when $\eta$-Ricci solitons, gradient $\eta$-Ricci solitons, gradient Einstein Solitons and gradient $m$-quasi Einstein solitons are its metrics. At first, the existence of the $\eta$-Ricci solitons is proved by a non-trivial example. We establish conditions for which the $\eta$-Ricci solitons are expanding, steady or shrinking. Besides, in the perfect fluid spacetimes obeying $f(\mathcal{R})$-gravity, when the potential vector field of $\eta$-Ricci soliton is of gradient type, we acquire a Poisson equation. Moreover, we investigate gradient $\eta$-Ricci solitons, gradient Einstein Solitons and gradient $m$-quasi Einstein solitons in $f(\mathcal{R})$-gravity, respectively. As a result, we establish some significant theorems about dark matter era.
\end{abstract}

\maketitle

\section{Introduction}

A Lorentzian manifold is a specific type of semi-Riemannian manifold that has the Lorentzian metric $g$. A time-oriented 4-dimensional Lorentzian manifold $\mathcal{M}$ with signature $(-,+,+,+)$ is called spacetime.
\par

Einstein field equations (briefly, $\mathbf{EFE}$) is unfit to outline the late-time inflation of the universe without assuming the presence of a couple of undetectable components that should supply the cause for the dark matter and the dark energy. This is the major motivation for extending it to obtain a few of higher-order field equations of gravity. It's worth noting that $f(\mathcal{R})$-gravity is an intrinsic extension, rather than a generalization, of Einstein's gravity, in which the Hilbert-Einstein action of the gravitational field is replaced by a function $f(\mathcal{R})$, where $\mathcal{R}$ is the Ricci scalar. The preceding theory was introduced by Buchdahl in 1970 \cite{hab}, and it achieved viability with Starobinsky's \cite{aas} investigations of cosmic inflation. Various useful functional representations of $f(\mathcal{R})$ have been proposed in the literature, for more details see (\cite{cap}, \cite{cap1}, \cite{cap2}, \cite{ade}, \cite{ade1}, \cite{kde}, \cite{kde2}, \cite{kde1}).\par

In the general theory of relativity (briefly, $\mathbf{GR}$), the $\mathbf{EFE}$
\begin{equation}\label{a1}
    \mathcal{S}-\frac{\mathcal{R}}{2}g=\kappa^2 T,
\end{equation}
entails that $T$, the energy momentum tensor has vanishing divergence, where $\mathcal{S}$ is the Ricci tensor, $\mathcal{R}$ is the Ricci scalar and $\kappa =\sqrt{8\pi G}$, $G$ represents Newton's gravitational constant.

\par
It is well known that for a perfect fluid spacetime (briefly, $\mathcal{PFS}$) the energy momentum tensor (or, stress energy tensor) $T$ is given by,
\begin{equation}
\label{a2}
T=(\sigma+p)\mathcal{B} \otimes \mathcal{B}+p g,
\end{equation}
where $p$ being the isotropic pressure, $\sigma$ indicates the energy density of the $\mathcal{PFS}$ and $\rho$ is the velocity vector field which is defined by $g(E_1, \rho)=\mathcal{B}(E_1)$ for all $E_1$. In this $\mathcal{PFS}$, $\rho$ is a unit timelike vector field, that is, $g(\rho,\rho)=-1$.\par

The study of solitons on semi-Riemannian manifolds is a fascinating topic in differential geometry as well as in physics. Ricci solitons and their generalizations have seen rapid growth in recent years, providing new approaches for studying the topology and geometry of Riemannian manifolds. Because of their connection to general relativity, there has been a urge to study Ricci solitons and its generalizations in many geometrical contexts.\par

The concept of Ricci soliton is an intrinsic generalization of Einstein's metric. On a semi-Riemannian manifold $\mathcal{M}$, a Ricci soliton of a semi-Riemannian metric $g$ is defined by
\begin{equation}\label{a3}
\pounds_ {X_1}  g  + 2\beta_2 g + 2\mathcal{S} = 0,
\end{equation}
$\pounds$ being the Lie-derivative, a constant and a smooth vector field are denoted by $\beta_2$ and $X_1$, respectively, while the Ricci tensor is denoted by $\mathcal{S}$. The Ricci flow equation which was presented by Hamilton\cite{ham} is defined by
\begin{equation}\label{a4}
\frac{\partial}{\partial t}g = -2\mathcal{S}.
\end{equation}
The special solutions of the Ricci flow equations are known as Ricci solitons. In $\mathcal{PFS}$ with torse-forming vector field, Venkatesha and Kumara \cite{ven1} studied Ricci solitons. Ricci solitons have been investigated by several authors ( \cite{des2}, \cite{wan1}) and many others.\par

Real hypersurfaces in a complex space form was studied by Cho and Kimura\cite{cho} and have expanded the notion of Ricci solitons to $\eta$-Ricci solitons. The $\eta$-Ricci soliton on $(\mathcal{M},g)$  satisfies
\begin{equation}\label{a5}
\pounds_{X_1} g + 2\beta_2 g  + 2\beta_1 \eta\otimes\eta + 2\mathcal{S} = 0,
\end{equation}
$\beta_1$ being a constant and $\eta$ is a 1-form. If $\beta_1 = 0$, then it recovers Ricci solitons which are named proper for $\beta_1 \neq 0$.\par

The soliton turns into a gradient $\eta$-Ricci soliton if  $X_1$, the potential function is the gradient of a smooth function $f:\mathcal{M}\rightarrow \mathbb{R}$ . Hence the foregoing equation (\ref{a5}) transforms to
\begin{equation}\label{a6}
Hess\; \psi + \mathcal{S} + \beta_2 g + \beta_1 \eta\otimes \eta = 0,
\end{equation}
$Hess\; \psi$ being the Hessian of $\psi$. The $\eta$-Ricci soliton is called steady, shrinking or expanding according  as  $\beta_2 = 0$ , $\beta_2 < 0$ or $\beta_2 > 0$, respectively.\par

Many recent studies on $\eta$-Ricci and gradient $\eta$-Ricci solitons in the setting of contact and Riemannian manifolds have been published. In the examination of cosmological models such as $\mathcal{PFS}$, geometric flows are also initiated. Blaga explored the $\eta$-Ricci soliton in $\mathcal{PFS}$ and obtained the Poisson equation (\cite{blaga2}, \cite{bla3}).\par

If the semi-Riemannian metric $g$ obeys
\begin{equation}\label{a7}
Hess\; \psi + \mathcal{S} + (\beta_2  -\frac{\mathcal{R}}{2})g= 0,
\end{equation}
then it is called a gradient Einstein soliton \cite{cat1}. $\psi$ is called the Einstein potential function. The gradient Einstein soliton is called trivial \cite{cat1}, if $\psi$ is constant. This soliton is named shrinking, steady or expanding according as $\beta_2 < 0$, $\beta_2 = 0$ or $\beta_2 > 0$, respectively. Few interesting results of this solitons have been established in (\cite{cat}, \cite{de2}).\par

For a smooth function $\psi:\mathcal{M}\rightarrow \mathbb{R}$, if the semi-Riemannian metric $g$ fulfills
\begin{equation}\label{a8}
\mathcal{S}+Hess\; \psi=\frac{1}{m}d\psi \otimes d\psi +\beta g,
\end{equation}
where $\beta$ is a constant, $0 < m\leq \infty $ being the integer and $\otimes$ denote the tensor product, then it is named $m$-quasi Einstein metric \cite{br1}. Here, $\psi$ is called the $m$-quasi Einstein potential function \cite{br1}. If $m = \infty$, the preceding equation (\ref{a8}) turns into a gradient Ricci soliton. Few fundamental classifications of the above stated metrics was presented by He et al. \cite{he}. Also, a few characterizations of these solitons have been characterized (in details) in (\cite{de1}, \cite{hu}).\par

Although the majority of the researches of solitons have been worked out in Riemannian category, the Ricci and gradient Ricci solitons have been investigated in the Lorentzian setting (\cite{bar1}, \cite{bat}, \cite{bro},  \cite{on}). So far in the existing literature, few fascinating results on Ricci solitons, gradient Ricci and gradient Yamabe solitons in $\mathcal{PFS}$ on $\mathbf{GR}$ theory have been studied in (\cite{blaga2}, \cite{de}). In this article, we intend to investigate the $\eta$-Ricci solitons, gradient $\eta$-Ricci solitons, gradient Einstein Solitons and gradient $m$-quasi Einstein solitons on $\mathcal{PFS}$ obeying $f(\mathcal{R})$-gravity.\par
We layout our article as:\par
In Section $2$, the existence of $\eta$-Ricci solitons is established. In next Section, we deduce the expression of $\mathcal{S}$ in a $\mathcal{PFS}$ with constant $\mathcal{R}$ satisfying $f(\mathcal{R})$-gravity. Section $4$ and $5$ deal with $\eta$-Ricci solitons and gradient $\eta$-Ricci solitons, respectively, on a $\mathcal{PFS}$ obeying $f(\mathcal{R})$-gravity. We study the properties of $\mathcal{PFS}$ fulfilling $f(\mathcal{R})$-gravity equipped with gradient Einstein solitons in Section $6$. Next, we consider a gradient $m$-quasi Einstein solitons. The study is concluded with a discussion.

\section{Existence of $\eta$-Ricci solitons in spacetimes}
Here we construct a non-trivial example of spacetime to establish the existence of $\eta$-Ricci solitons.\par

\;\;\;\;\;\;In a Lorentzian manifold $\mathbb{R}^{4}$ of dimension 4, let the Lorentzian metric $g$ be given by
\begin{eqnarray}\label{26.1}
    ds^{2}&=&g_{ij}d\mathrm{w}^{i}d\mathrm{w}^{j}
    =-(d\mathrm{w}_4)^{2}+e^{2\mathrm{w}_4}[(d\mathrm{w}_3)^{2}+ (d\mathrm{w}_2)^2+(d\mathrm{w}_1)^{2}],
\end{eqnarray}
where $i,j=1,2,3,4$.\par
Utilizing (\ref{26.1}), we can write
\begin{equation}\label{26.2}
    g_{11}=e^{2\mathrm{w}_4},g_{22}=e^{2\mathrm{w}_4},\ g_{33}=e^{2\mathrm{w}_4}, g_{44}=-1.
\end{equation}

Making use of (\ref{26.2}), the non-vanishing components of Christoffel symbol, the curvature tensor $K$ and the Ricci tensor $\mathcal{S}$ are obtained as
$$\Gamma^{4}_{11}=\Gamma^{4}_{22}=\Gamma^{4}_{33}=e^{2\mathrm{w}_4},\;\;\Gamma ^{1}_{14}= \Gamma ^{2}_{24}=\Gamma ^{3}_{34}=1,$$

$$K_{1441}=K_{2442}=K_{3443}=e^{2\mathrm{w}_4},\;\;\ K_{1221}=K_{1331}=K_{2332}=-e^{4\mathrm{w}_4}.$$

$$\mathcal{S}_{11}=\mathcal{S}_{22}=\mathcal{S}_{33}=-3e^{2\mathrm{w}_4},\;\;\ \mathcal{S}_{44}=3.$$

Hence, the Ricci scalar $\mathcal{R}=-12$.\par
Let $\eta$, the 1-form be defined by $$\eta(X_1)=g(X_1,\rho)$$ for any $X_1$. Now if we take $X_1=\rho$ in (\ref{a5}), then the $\eta$-Ricci soliton takes the form

\begin{equation}\label{aa6}
2[ g_{ii}+\eta_{i}\otimes \eta_{i}] =- 2 \mathcal{S}_{ii} - 2\beta_2 g_{ii} -2\beta_1 \eta_{i}\otimes \eta_{i},
\end{equation}
for $i=1,2,3,4.$\par

Thus the data $(g,\rho,\beta_1,\beta_2)$ is an $\eta$-Ricci soliton on $(\mathbb{R}^{4},g)$  where $\beta_1=-1$ and $\beta_2=2$ and the soliton is expanding.\par

\section{ Perfect fluid spacetimes obeying $f(\mathcal{R})$-gravity}
Now, we deal with 4-dimensional $\mathcal{PFS}$ satisfying $f(\mathcal{R})$-gravity. We now consider $\mathcal{H}$, a modified Einstein-Hilbert action term
\begin{equation}\label{z2}
   \mathcal{H} =\frac{1}{\kappa^2}\int \mathcal{L} _m \sqrt{(-g)}d^{4}x+\frac{1}{\kappa^2}\int f(\mathcal{R}) \sqrt{(-g)}d^{4}x,
\end{equation}
where $\mathcal{L}_m$ is the matter Lagrangian density of the scalar field. Here,
\begin{equation}\label{z3}
   T_{ab}=\frac{-2\delta (\sqrt{-g})\mathcal{L}_m}{\sqrt{-g}\delta ^{ab}},
\end{equation}
where $T_{ab}$ are the components of the energy momentum tensor or the stress-energy tensor $T$.\par
In local coordinate system, in a $\mathcal{PFS}$, $T_{ab}$ is defined by
\begin{equation}\label{zz3}
   T_{ab}= (\sigma+p)\mathcal{B}_a \mathcal{B}_b+p g_{ab},
\end{equation}
where $\mathcal{B}_a$ is a unit time like vector.

Assuming $\mathcal{L}_m$ completely depends on $g_{ab}$ (the components of the metric tensor $g$), the variation of action of (\ref{z2}) produces the field equations of $f(\mathcal{R})$-gravity
\begin{eqnarray}
f_\mathcal{R}(\mathcal{R})\mathcal{R}_{ab}&-& \frac{1}{2}f(\mathcal{R})g_{ab}+(g_{ab}\Box-\nabla_a \nabla_b)f_\mathcal{R} (\mathcal{R}) \nonumber \\
   &&= \kappa^2 T_{ab},\label{z4}
\end{eqnarray}
where $\mathcal{R}_{ab}$ are the local components of the Ricci tensor $\mathcal{S}$ and $\Box\equiv\nabla_c \nabla^c$ is called d'Alembert operator, $\nabla_a$ denotes the covariant derivative.\par 
Taking constant Ricci scalar, the field equation (\ref{z4}) gives the form
\begin{equation}
\mathcal{R}_{ab}- \frac{\mathcal{R}}{2}g_{ab}=\frac{\kappa^2}{f_\mathcal{R}(\mathcal{R})} T_{ab}^{eff},\label{z7}
\end{equation}
where
\begin{equation*}
    T_{ab}^{eff}=T_{ab}
    +\frac{f(\mathcal{R})-\mathcal{R}f_\mathcal{R}(\mathcal{R})}{2\kappa^2}g_{ab}.
\end{equation*}
In case of constant $\mathcal{R}$, we acquire
\begin{align}\label{g5}
R_{ab}-\frac{\mathcal{R}}{2}g_{ab}=\frac{\kappa^2}{\mathcal{F}_\mathcal{R}(\mathcal{R})}T_{ab}
+\big[\frac{(\mathcal{F}(\mathcal{R})-\mathcal{R}\mathcal{F}_\mathcal{R}(\mathcal{R})}{2\mathcal{F}_\mathcal{R}(\mathcal{R})}g_{ab}\big].
\end{align}	
Using (\ref{zz3}) in (\ref{g5}), we acquire
\begin{align}\label{h1}
R_{ab}-\frac{\mathcal{R}}{2}g_{ab}=\frac{\kappa^2}{\mathcal{F}_\mathcal{R}(\mathcal{R})}\{(p+\sigma)\mathcal{B}_{a}\mathcal{B}_{b}
+pg_{ab}\}
+\big[\frac{(\mathcal{F}(\mathcal{R})-\mathcal{R}\mathcal{F}_\mathcal{R}(\mathcal{R})}{2\mathcal{F}_\mathcal{R}(\mathcal{R})}g_{ab}\big].
\end{align}	
Multiplying both sides of the previous equation by $g^{ab}$ gives
\begin{align}\label{h2}
\mathcal{R}-2\mathcal{R}=\frac{\kappa^2}{\mathcal{F}_\mathcal{R}(\mathcal{R})}\{-(p+\sigma)+4p\}
+2\big[\frac{(\mathcal{F}(\mathcal{R})-\mathcal{R}\mathcal{F}_\mathcal{R}(\mathcal{R})}
{\mathcal{F}_\mathcal{R}(\mathcal{R})}\big],
\end{align}	
which implies
\begin{align}\label{h3}
\mathcal{R}=\frac{1}{\mathcal{F}_\mathcal{R}(\mathcal{R})}\{\kappa^{2}(3p-\sigma)+2\mathcal{F}(\mathcal{R})\}.
\end{align}
Putting the value of $\mathcal{R}$ in (\ref{h1}) yields

\begin{align}\label{h4}
R_{ab}=\frac{2\kappa^2 p+\mathcal{F}(\mathcal{R})}{2\mathcal{F}_\mathcal{R}(\mathcal{R})} g_{ab}+\frac{\kappa^2 (p+\sigma)}{\mathcal{F}_\mathcal{R}(\mathcal{R})}\mathcal{B}_{a}\mathcal{B}_{b}.
\end{align}
\begin{theorem}
For constant Ricci scalar, in $\mathcal{F}(\mathcal{R})$-gravity theory fulfilling a perfect fluid spacetime solution, the Ricci tensor $R_{ab}$ is of the form (\ref{h4}).
\end{theorem}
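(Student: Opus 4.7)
The plan is to derive (\ref{h4}) directly from the $f(\mathcal{R})$-field equations (\ref{z4}) by exploiting the constancy of $\mathcal{R}$, then imposing the perfect-fluid form of $T_{ab}$, and finally algebraically isolating the Ricci tensor via a trace argument.

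First I would specialize (\ref{z4}) to the case when $\mathcal{R}$ is constant. Because $f_{\mathcal{R}}(\mathcal{R})$ is then a constant scalar on $\mathcal{M}$, both $\nabla_a\nabla_b f_{\mathcal{R}}(\mathcal{R})$ and $\Box f_{\mathcal{R}}(\mathcal{R})$ vanish identically, so the fourth-order terms drop out. Rearranging the remaining algebraic equation in the Einstein-tensor form produces (\ref{g5}), which I would record as the starting identity.

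Next I would substitute the perfect-fluid stress-energy tensor (\ref{zz3}) into (\ref{g5}) to arrive at (\ref{h1}). To eliminate $\mathcal{R}$ from this relation, I would take the trace: contracting both sides with $g^{ab}$, using $g^{ab}g_{ab}=4$ and $g^{ab}\mathcal{B}_{a}\mathcal{B}_{b}=-1$ (since $\mathcal{B}$ is the dual of the unit timelike velocity $\rho$), yields the scalar equation (\ref{h2}). Solving for $\mathcal{R}$ gives (\ref{h3}).

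Finally I would reinsert the expression (\ref{h3}) for $\mathcal{R}$ into (\ref{h1}) and collect the coefficients of $g_{ab}$ and $\mathcal{B}_{a}\mathcal{B}_{b}$ separately. The $g_{ab}$-coefficient simplifies to $\frac{2\kappa^{2}p+f(\mathcal{R})}{2f_{\mathcal{R}}(\mathcal{R})}$ after the pressure-dependent pieces from $\mathcal{R}/2$ cancel against the corresponding pieces from the effective stress-energy term, while the $\mathcal{B}_{a}\mathcal{B}_{b}$-coefficient is immediate, producing (\ref{h4}). The only delicate step is the trace computation and the subsequent bookkeeping to confirm that the $\sigma$-dependent contributions cancel out of the $g_{ab}$-part, leaving only $p$ and the $f$-terms; everything else is direct substitution. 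Since the derivation is purely algebraic once (\ref{z4}) has been simplified by constancy of $\mathcal{R}$, there is no real obstacle, and the theorem follows.
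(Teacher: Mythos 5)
Your proposal is correct and follows essentially the same route as the paper: drop the derivative terms in (\ref{z4}) by constancy of $\mathcal{R}$, insert the perfect-fluid form (\ref{zz3}) to get (\ref{h1}), trace to obtain (\ref{h3}), and substitute back to isolate $R_{ab}$ as in (\ref{h4}). The only cosmetic remark is that the $\mathcal{R}/2\,g_{ab}$ term cancels identically against $-\mathcal{R}f_{\mathcal{R}}/(2f_{\mathcal{R}})\,g_{ab}$, so the $\sigma$-cancellation you describe happens automatically whether or not (\ref{h3}) is inserted.
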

In index free notation the Ricci tensor is given by
\begin{equation}\label{b7}
    \mathcal{S}(E_1,F_1)=\alpha_1 \mathcal{B}(E_1)\mathcal{B}(F_1)+\alpha_2 g(E_1,F_1),
\end{equation}
where $\alpha_1=\frac{\kappa^2 (p+\sigma)}{f_\mathcal{R} (\mathcal{R})}$ and $\alpha_2=\frac{2\kappa^2 p+f(\mathcal{R})}{2f_\mathcal{R} (\mathcal{R})}$. From the last equation the Ricci operator $Q$ (defined by $g(QE_1,F_1)=\mathcal{S}(E_1,F_1))$ is written by
\begin{equation}\label{b8}
Q(E_1)=\alpha_1 \mathcal{B}(E_1)\rho+\alpha_2 E_1,
\end{equation}
where $\rho$ is the velocity vector field corresponding to the 1-form $\mathcal{B}$.\par
Furthermore, $p$ and $\sigma$ are interconnected by a state equation of the form $p = p(\sigma )$ and the $\mathcal{PFS}$ is named isentropic. Moreover, the $\mathcal{PFS}$ is called stiff matter fluid, if $p = \sigma$. The $\mathcal{PFS}$ is said to be the dust matter fluid if $p = 0$, the dark matter era if $p+\sigma =0$ and the radiation era if $p =\frac{\sigma}{3}$.

\section{  $\eta$-Ricci solitons on perfect fluid spacetimes obeying $f(\mathcal{R})$-gravity}

$\;\;\;\;$ Suppose a $\mathcal{PFS}$ with constant $\mathcal{R}$ obeying $f(\mathcal{R})$-gravity, admits an $\eta$-Ricci soliton defined by $(\ref{a5})$, where the 1-form $\eta$ is identical with the 1-form $\mathcal{B}$ of the $\mathcal{PFS}$, that is, $\eta(E_{1})=\mathcal{B}(E_{1})=g(E_{1},\rho)$, where $\rho$ is a unit timelike vector. Then the equation $(\ref{a5})$ can be rewritten as
\begin{equation}\nonumber
\pounds_{X_1} g + 2\beta_2 g  + 2\beta_1 \mathcal{B}\otimes\mathcal{B} + 2\mathcal{S} = 0.
\end{equation}
Making use of the explicit form of the Lie derivative, we get
\begin{equation}
\label{c1}
\mathcal{S}(E_1,F_1)=-\frac{1}{2}[g(\nabla_{E_1} \rho,F_1)+g(E_1,\nabla_{F_1} \rho)]-\beta_2 g(E_1,F_1)-\beta_1 \mathcal{B}(E_1)\mathcal{B}(F_1).
\end{equation}
Contracting the previous equation, we acquire
\begin{equation}\label{c2}
    \mathcal{R}=-div \rho -4\beta_2 +\beta_1,
\end{equation}
where `div' denotes divergence.
Again, contracting the equation (\ref{b7}), we obtain
\begin{equation}\label{c3}
    \mathcal{R}=-\alpha_1 +4\alpha_2.
\end{equation}
Making use of the last two equations, we get
\begin{equation}\label{c4}
    -\alpha_1 +4\alpha_2=-div \rho -4\beta_2+\beta_1.
\end{equation}
Comparing (\ref{b7}) and (\ref{c1}), we acquire
\begin{equation}\nonumber
\alpha_1 \mathcal{B}(E_1)\mathcal{B}(F_1)+\alpha_2 g(E_1,F_1)=-\frac{1}{2}[g(\nabla_{E_1} \rho,F_1)+g(E_1,\nabla_{F_1} \rho)]-\beta_2 g(E_1,F_1)-\beta_1 \mathcal{B}(E_1)\mathcal{B}(F_1).
\end{equation}
Since $\rho$ is a unit timelike vector, we have $g(\nabla_{\rho} \rho,\rho)=0$. Using this result and putting $E_1=F_1=\rho$ in the last equation, we infer that
\begin{equation}\label{c5}
    \alpha_1 -\alpha_2=\beta_2-\beta_1.
\end{equation}
Utilizing the foregoing equation in (\ref{c4}), we have
\begin{equation}\label{c6}
    \beta_2= -\alpha_2 - \frac{div \rho}{3}.
\end{equation}
Let us choose that the potential vector field has vanishing divergence. Therefore, equation (\ref{c6}) reveals that
\begin{equation}\label{c7}
    \beta_2= -\alpha_2=-\frac{2\kappa^2 p+f(\mathcal{R})}{2f_\mathcal{R} (\mathcal{R})},
\end{equation}
where we have used equation (\ref{b7}).\par
Thus, we can write the subsequent:
\begin{theorem}
\label{thm3.1}
If $(g,\rho,\beta_2,\beta_1 )$ is an $\eta$-Ricci soliton in a $\mathcal{PFS}$ with constant $\mathcal{R}$ fulfilling $f(\mathcal{R})$-gravity and the 1-form $\eta$ is identical with the 1-form $\mathcal{B}$ of the $\mathcal{PFS}$ and the potential vector field $\rho$ has vanishing divergence, then the soliton is steady if $p=-\frac{f(\mathcal{R})}{2\kappa^2}$, expanding for $p<\frac{f(\mathcal{R})}{2\kappa^2}$ and shrinking for $p>\frac{f(\mathcal{R})}{2\kappa^2}$.
 \end{theorem}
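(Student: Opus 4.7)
The statement is essentially a direct reading of the sign of the soliton constant $\beta_2$ from the computation that the authors have developed in equations (\ref{c1})--(\ref{c7}). My plan is therefore to confirm that this derivation goes through cleanly under the hypotheses and then carry out the three-case analysis that is left to us.

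First, I would restate the setup: identifying $\eta$ with $\mathcal{B}$ in (\ref{a5}) and expanding the Lie derivative of $g$ along $\rho$ gives the symmetric relation (\ref{c1}) for $\mathcal{S}(E_1,F_1)$. Contracting (\ref{c1}) over an orthonormal frame (recalling that $\mathcal{B}(\rho)=-1$ because $\rho$ is unit timelike) produces the scalar identity (\ref{c2}), namely $\mathcal{R}=-\operatorname{div}\rho-4\beta_2+\beta_1$. Independently, contracting the perfect-fluid form (\ref{b7}) of $\mathcal{S}$ yields (\ref{c3}): $\mathcal{R}=-\alpha_1+4\alpha_2$. Equating (\ref{c2}) and (\ref{c3}) gives (\ref{c4}). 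To get a second relation between the $\alpha_i$ and $\beta_i$, I would set $E_1=F_1=\rho$ in the comparison of (\ref{b7}) with (\ref{c1}), using $g(\nabla_\rho\rho,\rho)=\tfrac{1}{2}\rho(g(\rho,\rho))=0$ to obtain (\ref{c5}). Combining (\ref{c4}) and (\ref{c5}) eliminates $\alpha_1$ and $\beta_1$ and delivers (\ref{c6}): $\beta_2=-\alpha_2-\tfrac{1}{3}\operatorname{div}\rho$.

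The hypothesis $\operatorname{div}\rho=0$ then collapses (\ref{c6}) to $\beta_2=-\alpha_2$, and substituting the expression for $\alpha_2$ from (\ref{b7}) yields the explicit formula
\begin{equation*}
\beta_2 \;=\; -\,\frac{2\kappa^{2}p+f(\mathcal{R})}{2\,f_{\mathcal{R}}(\mathcal{R})}.
\end{equation*}
From here the classification is a sign inspection: under the standard viability assumption $f_{\mathcal{R}}(\mathcal{R})>0$ in $f(\mathcal{R})$-gravity, the sign of $\beta_2$ is governed by $-(2\kappa^{2}p+f(\mathcal{R}))$. Thus $\beta_2=0$ precisely when $p=-\tfrac{f(\mathcal{R})}{2\kappa^{2}}$ (steady), $\beta_2>0$ when $2\kappa^{2}p+f(\mathcal{R})<0$ (expanding), and $\beta_2<0$ when $2\kappa^{2}p+f(\mathcal{R})>0$ (shrinking), which is exactly the stated trichotomy.

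The only real obstacle is bookkeeping: one must be sure that the Lie-derivative expansion $(\pounds_{X_1}g)(E_1,F_1)=g(\nabla_{E_1}\rho,F_1)+g(E_1,\nabla_{F_1}\rho)$ is applied with the correct sign conventions when $X_1=\rho$, and that the contraction of the rank-two tensor $\eta\otimes\eta$ contributes $\eta(\rho)^{2}=1$ on a unit timelike vector rather than $-1$. Provided one tracks these signs, the three-case analysis is immediate. A final remark I would add is that if one drops the viability assumption $f_{\mathcal{R}}(\mathcal{R})>0$, the inequalities in the expanding/shrinking cases must be reversed, so the classification is ultimately a statement about the sign of $(2\kappa^{2}p+f(\mathcal{R}))/f_{\mathcal{R}}(\mathcal{R})$.
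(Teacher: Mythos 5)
Your proposal is correct and follows essentially the same route as the paper: identify $\eta$ with $\mathcal{B}$, contract the soliton equation and the perfect-fluid form of $\mathcal{S}$, compare at $E_1=F_1=\rho$, and use $\operatorname{div}\rho=0$ to reach $\beta_2=-\alpha_2=-\frac{2\kappa^{2}p+f(\mathcal{R})}{2f_{\mathcal{R}}(\mathcal{R})}$ before reading off the sign. One small point: your inequalities place the expanding/shrinking threshold at $p=-\frac{f(\mathcal{R})}{2\kappa^{2}}$ (consistent with the derived formula and the steady case), whereas the theorem as printed states $p\lessgtr\frac{f(\mathcal{R})}{2\kappa^{2}}$ without the minus sign, so your version is the one that actually matches the computation and the paper's statement appears to contain a sign slip; your remark about assuming $f_{\mathcal{R}}(\mathcal{R})>0$ is also an assumption the paper uses tacitly.
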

Now solving the equations (\ref{c4}) and (\ref{c5}) after putting the value of $\alpha_1, \alpha_2$, we acquire
\begin{equation}\label{e8}
\left\{ \begin{array}{lcl}
\beta_2 =-\frac{div \rho}{3}-\frac{2\kappa^2 p+f(\mathcal{R})}{2f_\mathcal{R} (\mathcal{R})}  
\\\\
\beta_1=-\frac{div \rho}{3}- \frac{\kappa^2 (p+\sigma)}{f_\mathcal{R} (\mathcal{R})}.
\end{array}\right.
\end{equation}
Hence, from the foregoing equation, we infer that
$$\Delta (\psi)=div (grad \psi)=div \rho=-3(\beta_1+ \frac{\kappa^2 (p+\sigma)}{f_\mathcal{R} (\mathcal{R})}).$$
Therefore, we can state:
\begin{theorem}
\label{thm3.2}
Let the $\mathcal{PFS}$ with constant $\mathcal{R}$ obeying $f(\mathrm{R})$-gravity admit a $\eta$- Ricci solitons. If the 1-form $\eta$ is identical with the 1-form $\mathcal{B}$ of the $\mathcal{PFS}$ and  $\eta$ is the $g$-dual 1-form of the gradient vector field $\rho =$ grad$(\psi)$, then the Poisson equation obeys by $\psi$ is
$$\Delta (\psi)=-3(\beta_1+ \frac{\kappa^2 (p+\sigma)}{f_\mathcal{R} (\mathcal{R})}).$$
\end{theorem}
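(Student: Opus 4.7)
The strategy is to extract an expression for $\mathrm{div}\,\rho$ from the $\eta$-Ricci soliton identity combined with the $f(\mathcal{R})$-gravity form of the Ricci tensor, and then interpret this expression as a Poisson equation once the gradient hypothesis $\rho = \mathrm{grad}(\psi)$ is imposed. Since the bulk of the work is to produce the correct linear relation between $\beta_1$ and $\mathrm{div}\,\rho$, I plan to set up two independent scalar identities and solve for $\beta_1$.

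First, I would rewrite (\ref{a5}) with $\eta = \mathcal{B}$ using the symmetrized covariant derivative of $\rho$, obtaining an expression for $\mathcal{S}$ of the type
\begin{equation*}
\mathcal{S}(E_1,F_1) = -\tfrac{1}{2}\bigl[g(\nabla_{E_1}\rho,F_1) + g(E_1,\nabla_{F_1}\rho)\bigr] - \beta_2 g(E_1,F_1) - \beta_1 \mathcal{B}(E_1)\mathcal{B}(F_1).
\end{equation*}
Tracing this identity with $g^{ab}$ and using $\mathcal{B}(\rho)=-1$ together with the contraction identity for $\mathrm{div}\,\rho$, I would get $\mathcal{R} = -\mathrm{div}\,\rho - 4\beta_2 + \beta_1$; tracing (\ref{b7}) on the geometric side gives $\mathcal{R} = -\alpha_1 + 4\alpha_2$. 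Equating these yields the first scalar relation among $\beta_1,\beta_2,\mathrm{div}\,\rho,\alpha_1,\alpha_2$.

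To decouple $\beta_1$ and $\beta_2$, I would plug $E_1 = F_1 = \rho$ into the soliton identity, exploiting the timelike normalization $g(\rho,\rho) = -1$, which forces $g(\nabla_\rho \rho,\rho) = 0$, to obtain the second scalar relation $\alpha_1 - \alpha_2 = \beta_2 - \beta_1$. Solving the resulting two-by-two linear system for $\beta_1,\beta_2$ and substituting the explicit values $\alpha_1 = \kappa^2(p+\sigma)/f_\mathcal{R}(\mathcal{R})$ and $\alpha_2 = (2\kappa^2 p + f(\mathcal{R}))/(2f_\mathcal{R}(\mathcal{R}))$ from Section 3 should reproduce the system (\ref{e8}); in particular the $\beta_1$ line reads
\begin{equation*}
\beta_1 = -\frac{\mathrm{div}\,\rho}{3} - \frac{\kappa^2(p+\sigma)}{f_\mathcal{R}(\mathcal{R})}.
\end{equation*}

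Finally, the gradient hypothesis $\rho = \mathrm{grad}(\psi)$ gives $\Delta(\psi) = \mathrm{div}(\mathrm{grad}\,\psi) = \mathrm{div}\,\rho$, so solving the previous display for $\mathrm{div}\,\rho$ produces the claimed Poisson equation. The only delicate step is consistent sign-tracking under the timelike normalization, since contractions of $\mathcal{B}\otimes\mathcal{B}$ flip sign relative to the Riemannian case and propagate into both the trace identity and the $(\rho,\rho)$-evaluation; once that bookkeeping is in place, the remaining manipulation is purely linear algebraic.
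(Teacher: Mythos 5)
Your proposal is correct and follows essentially the same route as the paper: trace the soliton identity and the Ricci tensor expression (\ref{b7}), evaluate at $E_1=F_1=\rho$ to get $\alpha_1-\alpha_2=\beta_2-\beta_1$, solve the linear system to obtain $\beta_1=-\tfrac{\mathrm{div}\,\rho}{3}-\tfrac{\kappa^2(p+\sigma)}{f_\mathcal{R}(\mathcal{R})}$, and then use $\Delta(\psi)=\mathrm{div}\,\rho$ under the gradient hypothesis. The sign bookkeeping you flag (traces of $\mathcal{B}\otimes\mathcal{B}$ giving $-1$) is handled exactly as in the paper's derivation of (\ref{c2})--(\ref{e8}).
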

\begin{example}
In dark matter era with constant $\mathcal{R}$ satisfying $f(\mathcal{R})$-gravity, the $\eta$-Ricci solitons $(g,\rho,\beta_1,\beta_2)$ is given by
\begin{equation*}
    \left\{ \begin{array}{lcl}
\beta_2 =-\frac{div \rho}{3}-\frac{2\kappa^2 p+f(\mathcal{R})}{2f_\mathcal{R} (\mathcal{R})}  
\\\\
\beta_1=-\frac{div \rho}{3}.
\end{array}\right.
\end{equation*}
\end{example}
\begin{example}
In stiff matter fluid with constant $\mathcal{R}$ fulfilling $f(\mathcal{R})$-gravity, the $\eta$-Ricci solitons $(g,\rho,\beta_1,\beta_2)$ is given by
\begin{equation*}
    \left\{ \begin{array}{lcl}
\beta_2 =-\frac{div \rho}{3}-\frac{2\kappa^2 p+f(\mathcal{R})}{2f_\mathcal{R} (\mathcal{R})}  
\\\\
\beta_1=-\frac{div \rho}{3}- \frac{2 \kappa^2 p}{f_\mathcal{R} (\mathcal{R})}.
\end{array}\right.
\end{equation*}
\end{example}
\begin{example}
In a radiation era with constant $\mathcal{R}$ obeying $f(\mathcal{R})$-gravity, the $\eta$-Ricci solitons $(g,\rho,\beta_1,\beta_2)$ is given by
\begin{equation*}
    \left\{ \begin{array}{lcl}
\beta_2 =-\frac{div \rho}{3}-\frac{2\kappa^2 p+f(\mathcal{R})}{2f_\mathcal{R} (\mathcal{R})}  
\\\\
\beta_1=-\frac{div \rho}{3}- \frac{4 \kappa^2 p}{f_\mathcal{R} (\mathcal{R})}.
\end{array}\right.
\end{equation*}
\end{example}

\section{gradient $\eta$-Ricci solitons on perfect fluid spacetimes obeying $f(\mathcal{R})$-gravity}

\;\;\;\;Suppose that the soliton vector field $X_1$ of the $\eta$-Ricci soliton in a $\mathcal{PFS}$ with constant $\mathcal{R}$ satisfying $f(\mathcal{R})$-gravity is a gradient of some smooth function $\psi$, where the 1-form $\eta$ is identical with the 1-form $\mathcal{B}$ of the $\mathcal{PFS}$. Then equation (\ref{a6}) converts to
\begin{equation}\label{aa6}
Hess\; \psi + \mathcal{S} + \beta_2 g + \beta_1 \mathcal{B}\otimes \mathcal{B} = 0
\end{equation}
which implies
\begin{equation}
\label{3.3}
\nabla_{E_1}D\psi=- QE - \beta_2 E_1-\beta_1 \mathcal{B}(E_1)\rho,
\end{equation}
for all $E_1 \in \mathfrak{X_1}(\mathcal{M})$.
Because of the foregoing equation and the relation
\begin{equation}
\label{3.4}
R(E_1, F_1)D\psi=\nabla_{E_1} \nabla_{F_1}D\psi-\nabla_{F_1} \nabla_{E_1}D\psi-\nabla_{[E_1, F_1]}D\psi,
\end{equation}
we acquire
\begin{eqnarray}
\label{3.5}
R(E_1, F_1)D\psi&=&-[(\nabla_{E_1}Q)(F_1)-(\nabla_{F_1}Q)(E_1)]\nonumber \\&&
+\beta_1[(\nabla_{F_1} \mathcal{B})(E_1)\rho+\mathcal{B}(E_1)\nabla_{F_1} \rho-(\nabla_{E_1} \mathcal{B})(F_1)\rho-\mathcal{B}(F_1)\nabla_{E_1} \rho].
\end{eqnarray}
Now making use of the equation (\ref{b8}) we get
\begin{eqnarray}
\label{3.6}
&&(\nabla_{E_1}Q)(F_1)=E_1 (\alpha_2)F_1\nonumber \\&&
+E_1 (\alpha_1) \mathcal{B}(F_1)\rho+\alpha_1 \{(\nabla_{E_1} \mathcal{B})F_1 \rho+\mathcal{B}(F_1)\nabla_{E_1} \rho\}.
\end{eqnarray}
Utilizing (\ref{3.6}) in (\ref{3.5}), we obtain
\begin{eqnarray}
\label{3.7}
&&R(E_1, F_1)D\psi=-[E_1 (\alpha_2)F_1-F_1 (\alpha_2)E_1 +E_1 (\alpha_1) \mathcal{B}(F_1)\rho-F_1 (\alpha_1) \mathcal{B}(E_1)\rho\nonumber\\&&
+\alpha_1 \{(\nabla_{E_1} \mathcal{B})F_1 \rho+\mathcal{B}(F_1)\nabla_{E_1} \rho-(\nabla_{F_1} \mathcal{B})E_1 \rho-\mathcal{B}(E_1)\nabla_{F_1} \rho\}]\nonumber \\&&
+\beta_1[(\nabla_{F_1} \mathcal{B})(E_1)\rho+\mathcal{B}(E_1)\nabla_{F_1} \rho-(\nabla_{E_1} \mathcal{B})(F_1)\rho-\mathcal{B}(F_1)\nabla_{E_1} \rho].
\end{eqnarray}
Contracting the preceding equation, we find
\begin{eqnarray}
\label{3.8}
S(F_1, D\psi)&=&-[F_1 (\alpha_1)+\rho (\alpha_1) \mathcal{B}(F_1)-3 F_1 (\alpha_2)+\alpha_1 \{(\nabla_{\rho} \mathcal{B})F_1 -\mathcal{B}(F_1)div \rho\}]\nonumber \\&&
+\beta_1[(\nabla_\rho \mathcal{B})(F_1)+\mathcal{B}(F_1) div \rho],
\end{eqnarray}
where `$div$' being the divergence.
Again, from the equation (\ref{b7}), we infer
\begin{equation}
\label{3.9}
S(F_1, D\psi)=\alpha_1 \mathcal{B}(F_1)\mathcal{B}(D \psi)+\alpha_2 g(F_1, D \psi).
\end{equation}
Putting $F_1=\rho$ in equations (\ref{3.8}) and (\ref{3.9}) and then comparing, we acquire
\begin{equation}
\label{3.10}
(\alpha_2-\alpha_1) \rho( \psi)= [3 \rho (\alpha_2)+\alpha_1 div \rho]+\beta_1 div \rho.
\end{equation}
Let the scalar $\alpha_2$ and the potential function $\psi$ remain invariant under the velocity vector $ \rho$, that is, $\rho (\alpha_2)=0$ and  $\rho(\psi) =0$.
Hence, equations (\ref{3.10}) reveals that
\begin{equation}
\label{3.11}
(\alpha_1 +\beta_1 )div \rho=0,
\end{equation}
which entails that either $\alpha_1 +\beta_1 = 0$ or $div \rho =0$.\par
{\bf Case I}.
We suppose that $\alpha_1 +\beta_1= 0$ and $div \rho \ne 0$. Then putting the value of $\alpha_1$ from equation (\ref{b7}), we conclude that
\begin{equation}
\label{3.12}
\frac{\kappa^2 (p+\sigma)}{f_\mathcal{R} (\mathcal{R})}=\beta_{1}= constant,
\end{equation}
which implies that $p+\sigma=$constant.\par

{\bf Case II}. We assume that  $\alpha_1 +\beta_1\neq 0$ and $div \rho =0$ which means the velocity vector field is conservative. Since a conservative vector field is always irrotational, we get the vorticity of the perfect fluid is zero.\par
Therefore, we can state:
\begin{theorem}
\label{thm4.1}
Let the $\mathcal{PFS}$ with constant $\mathcal{R}$ obeying $f(\mathcal{R})$-gravity admit a gradient $\eta$-Ricci soliton. If the 1-form $\eta$ is identical with the 1-form $\mathcal{B}$ of the $\mathcal{PFS}$ and the scalar $\alpha_2$ and the potential function $\psi$ remain invariant under the vector field $\rho$, then either the spacetime represents the state equation $p+\sigma=$constant, or the perfect fluid has vanishing vorticity.
\end{theorem}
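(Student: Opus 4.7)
The plan is to run the standard gradient-soliton machinery starting from (\ref{aa6}). First I would rewrite the Hessian condition as the first-order identity
\begin{equation*}
\nabla_{E_1} D\psi = -QE_1 - \beta_2 E_1 - \beta_1 \mathcal{B}(E_1)\rho,
\end{equation*}
so that second covariant derivatives of $\psi$ are controlled by $Q$ together with the 1-form $\mathcal{B}$ and the velocity vector $\rho$.

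Next, I would feed this into the curvature identity $R(E_1,F_1)D\psi = \nabla_{E_1}\nabla_{F_1}D\psi - \nabla_{F_1}\nabla_{E_1}D\psi - \nabla_{[E_1,F_1]}D\psi$. The needed expression for $\nabla_{E_1} Q$ is obtained by differentiating the formula (\ref{b8}), which produces the three contributions $E_1(\alpha_2)F_1 + E_1(\alpha_1)\mathcal{B}(F_1)\rho + \alpha_1\{(\nabla_{E_1}\mathcal{B})(F_1)\rho + \mathcal{B}(F_1)\nabla_{E_1}\rho\}$; substituting and antisymmetrizing in $(E_1,F_1)$ cancels the symmetric $\alpha_2 g$ piece and leaves a tractable expression.

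After contracting $R(E_1, F_1)D\psi$ on the slot $E_1$ to produce $\mathcal{S}(F_1, D\psi)$, I would match the result against the direct evaluation $\mathcal{S}(F_1, D\psi) = \alpha_1 \mathcal{B}(F_1)\mathcal{B}(D\psi) + \alpha_2 g(F_1, D\psi)$ read off from (\ref{b7}), then specialize to $F_1 = \rho$. Using $g(\rho,\rho) = -1$, hence $g(\nabla_\rho \rho,\rho) = 0$, and $\mathcal{B}(\rho) = -1$, the various $\mathcal{B}$-terms collapse and the identity reduces to the scalar equation
\begin{equation*}
(\alpha_2 - \alpha_1)\,\rho(\psi) \;=\; 3\rho(\alpha_2) + (\alpha_1 + \beta_1)\,\mathrm{div}\,\rho.
\end{equation*}

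Finally, I would invoke the two hypotheses $\rho(\alpha_2) = 0$ and $\rho(\psi) = 0$ to kill both sides except for $(\alpha_1 + \beta_1)\,\mathrm{div}\,\rho = 0$, and split into cases. In the case $\alpha_1 + \beta_1 = 0$, substituting $\alpha_1 = \kappa^2(p+\sigma)/f_\mathcal{R}(\mathcal{R})$ and using that $\beta_1$ and $f_\mathcal{R}(\mathcal{R})$ are constants yields the state equation $p+\sigma = \text{constant}$. In the complementary case $\mathrm{div}\,\rho = 0$, I would conclude that the vorticity of the fluid vanishes, following the convention used earlier in the paper. The step I expect to demand the most care is the $\nabla Q$ bookkeeping together with the contraction that eliminates the $\nabla\mathcal{B}$ and $\nabla\rho$ terms down to $\mathrm{div}\,\rho$; if this is organized cleanly, the rest of the argument is a one-line case analysis.
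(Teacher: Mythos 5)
Your proposal follows essentially the same route as the paper's own argument: the identity $\nabla_{E_1}D\psi=-QE_1-\beta_2E_1-\beta_1\mathcal{B}(E_1)\rho$, the curvature commutation formula with $\nabla Q$ computed from (\ref{b8}), contraction and comparison with (\ref{b7}) at $F_1=\rho$ to reach $(\alpha_2-\alpha_1)\rho(\psi)=3\rho(\alpha_2)+(\alpha_1+\beta_1)\,\mathrm{div}\,\rho$, and then the same two-case split giving $p+\sigma=$ constant or vanishing vorticity. This matches the paper's equations (\ref{3.3})--(\ref{3.12}) step for step, so the plan is correct and not a different approach.
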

\begin{remark}
If the constant of state equation vanishes, then we have $p+\sigma=0$, that is, the $\mathcal{PFS}$ represents the dark matter era.
\end{remark}

\section{gradient Einstein solitons on perfect fluid spacetimes obeying $f(\mathcal{R})$-gravity}

\;\;\;\;Suppose that the soliton vector field $X_1$ of the Einstein soliton in a $\mathcal{PFS}$ with constant $\mathcal{R}$ obeying $f(\mathcal{R})$-gravity is a gradient of some smooth function $\psi$. Then equation (\ref{a7}) converts to
\begin{equation}
\label{c3.3}
\nabla_{E_1}D\psi=- QE - (\beta_2 -\frac{\mathcal{R}}{2})E_1,
\end{equation}
for all $E_1 \in \mathfrak{X_1}(\mathcal{M})$.
Because of the foregoing equation and the relation
\begin{equation}
\label{c3.4}
R(E_1, F_1)D\psi=\nabla_{E_1} \nabla_{F_1}D\psi-\nabla_{F_1} \nabla_{E_1}D\psi-\nabla_{[E_1, F_1]}D\psi,
\end{equation}
we get
\begin{eqnarray}
\label{c3.5}
R(E_1, F_1)D\psi=-[(\nabla_{E_1}Q)(F_1)-(\nabla_{F_1}Q)(E_1)]
\end{eqnarray}
Now using the equation (\ref{b8}) we obtain
\begin{eqnarray}
\nonumber
&&(\nabla_{E_1}Q)(F_1)=E_1 (\alpha_2)F_1\nonumber \\&&
+E_1 (\alpha_1) \mathcal{B}(F_1)\rho+\alpha_1 \{(\nabla_{E_1} \mathcal{B})F_1 \rho+\mathcal{B}(F_1)\nabla_{E_1} \rho\}.
\end{eqnarray}
Making use of the above equation in (\ref{c3.5}), we infer that
\begin{eqnarray}
\label{c3.7}
&&R(E_1, F_1)D\psi=-[E_1 (\alpha_2)F_1-F_1 (\alpha_2)E_1 +E_1 (\alpha_1) \mathcal{B}(F_1)\rho-F_1 (\alpha_1) \mathcal{B}(E_1)\rho\nonumber\\&&
+\alpha_1 \{(\nabla_{E_1} \mathcal{B})F_1 \rho+\mathcal{B}(F_1)\nabla_{E_1} \rho-(\nabla_{F_1} \mathcal{B})E_1 \rho-\mathcal{B}(E_1)\nabla_{F_1} \rho\}].
\end{eqnarray}
Contracting the foregoing equation, we obtain
\begin{eqnarray}
\label{c3.8}
S(F_1, D\psi)&=&-[F_1 (\alpha_1)+\rho (\alpha_1) \mathcal{B}(F_1)\nonumber\\&&
-3 F_1 (\alpha_2)+\alpha_1 \{(\nabla_{\rho} \mathcal{B})F_1 -\mathcal{B}(F_1)div \rho\}],
\end{eqnarray}
where `$div$' being the divergence.\par
Again, from the equation (\ref{b7}), we infer
\begin{equation}
\label{3.99}
S(F_1, D\psi)=\alpha_1 \mathcal{B}(F_1)\mathcal{B}(D \psi)+\alpha_2 g(F_1, D \psi).
\end{equation}

Replacing $F_1$ by $\rho$ in equations (\ref{c3.8}) and (\ref{3.99}) and then comparing, we have
\begin{equation}
\label{c3.10}
(\alpha_2-\alpha_1) \rho (\psi)= [3 \rho (\alpha_2)+\alpha_1 div \rho].
\end{equation}
Let us choose the potential function $\psi$ and the scalar $\alpha_2$ remains invariant under $ \rho$, that is, $\rho (\psi)=0$ and  $\rho (\alpha_2)=0$.
Hence, equations (\ref{c3.10}) entails that
\begin{equation}
\label{c3.11}
\alpha_1 div \rho=0,
\end{equation}
which reveals that either $\alpha_1=0$ or $div \rho=0$.\par
{\bf Case I}.
We suppose that $\alpha_1=0$. Then putting the value of $\alpha_1$ from equation (\ref{b7}), we conclude that
\begin{equation}
\label{c3.12}
p+\sigma=0,
\end{equation}
that is, the dark matter era.\par
{\bf Case II}. We assume that $div \rho =0$, then similarly by previous theorem, we conclude that the $\mathcal{PFS}$ has vanishing vorticity.\par

Therefore, we can state:
\begin{theorem}
\label{thm4.1}
Let the $\mathcal{PFS}$ with constant $\mathcal{R}$ satisfying $f(\mathcal{R})$-gravity admit a Einstein soliton of gradient type. If the scalar $\alpha_2$ and the potential function $\psi$ remain invariant under the vector field $\rho$, then either the spacetime represents the dark matter era, or the perfect fluid has vanishing vorticity.
\end{theorem}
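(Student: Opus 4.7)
The plan is to adapt the argument already carried out in Section 5 for gradient $\eta$-Ricci solitons, exploiting the fact that the Einstein soliton equation (\ref{a7}) carries no $\eta$-term at all, so the derivation is cleaner. First I would rewrite the gradient Einstein soliton equation in operator form as $\nabla_{E_1}D\psi = -QE_1 - (\beta_2 - \mathcal{R}/2)E_1$, and feed it into the Ricci commutator $R(E_1,F_1)D\psi = \nabla_{E_1}\nabla_{F_1}D\psi - \nabla_{F_1}\nabla_{E_1}D\psi - \nabla_{[E_1,F_1]}D\psi$. Since $\mathcal{R}$ is constant by hypothesis, the factor $\beta_2 - \mathcal{R}/2$ is constant and its contribution antisymmetrizes to zero, leaving the clean identity (\ref{c3.5}): $R(E_1,F_1)D\psi = -[(\nabla_{E_1}Q)(F_1) - (\nabla_{F_1}Q)(E_1)]$.

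Next I would expand $(\nabla_{E_1}Q)(F_1)$ using the explicit form (\ref{b8}) of the Ricci operator, substitute into (\ref{c3.5}), and then contract on $E_1$ to produce the scalar relation (\ref{c3.8}) for $\mathcal{S}(F_1, D\psi)$ in terms of the derivatives of $\alpha_1$, $\alpha_2$, and $\textrm{div}\,\rho$. In parallel, formula (\ref{b7}) gives a second expression (\ref{3.99}) for $\mathcal{S}(F_1, D\psi)$. Setting $F_1 = \rho$ in both identities and subtracting, and using $g(\nabla_\rho \rho, \rho)=0$ (which kills the $(\nabla_\rho\mathcal{B})\rho$ contribution), I obtain (\ref{c3.10}): $(\alpha_2 - \alpha_1)\rho(\psi) = 3\rho(\alpha_2) + \alpha_1\,\textrm{div}\,\rho$.

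At this point the twin hypotheses $\rho(\psi) = 0$ and $\rho(\alpha_2) = 0$ collapse this to $\alpha_1\,\textrm{div}\,\rho = 0$, and the desired dichotomy writes itself. In Case I ($\alpha_1 = 0$), the identity $\alpha_1 = \kappa^2(p+\sigma)/f_\mathcal{R}(\mathcal{R})$ from (\ref{b7}) forces $p+\sigma = 0$, i.e.\ the dark matter era. In Case II ($\textrm{div}\,\rho = 0$) the velocity vector field is conservative, hence automatically irrotational, so the fluid has vanishing vorticity.

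The only real obstacle is the algebraic bookkeeping in the curvature-contraction step, but since that computation is essentially identical to the one already executed in Section 5 (with all the $\beta_1$-terms simply erased), no genuinely new difficulty arises. What is worth emphasizing in the write-up is the role played by \textbf{constancy of} $\mathcal{R}$: without it the commutator identity would pick up a non-trivial $(E_1 \mathcal{R})F_1 - (F_1 \mathcal{R})E_1$ term that would propagate into (\ref{c3.10}) and spoil the neat bifurcation.
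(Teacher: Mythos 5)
Your proposal reproduces the paper's own argument essentially verbatim: the operator form of the gradient Einstein soliton equation, the curvature commutator with the constant term $\beta_2-\mathcal{R}/2$ dropping out, the expansion of $\nabla Q$ via (\ref{b8}), contraction and comparison with (\ref{b7}) at $F_1=\rho$, and the dichotomy $\alpha_1\,\mathrm{div}\,\rho=0$ leading to $p+\sigma=0$ or vanishing vorticity. It is correct and takes the same route as the paper, with your remark on the role of constant $\mathcal{R}$ being a fair (if implicit in the paper) point of emphasis.
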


\section{Gradient $m$-quasi Einstein solitons on perfect fluid spacetimes obeying $f(\mathcal{R})$-gravity}
\;\;\;\;Here, we choose that in a $\mathcal{PFS}$ with constant $\mathcal{R}$ obeying $f(\mathcal{R})$-gravity is a gradient $m$-quasi Einstein solitons. At first, we establish the subsequent result
\begin{lemma}\label{lem1}
Every $\mathcal{PFS}$ with constant Ricci scalar in $f(\mathcal{R})$-gravity obeys the following:
\begin{eqnarray}\label{k2}
R(E_1,F_1)D \psi &=& (\nabla_{F_1}Q)E_1-(\nabla_{E_1} Q)F_1+\frac{\beta}{m}\{ F_1 (\psi)E_1-E_1 (\psi)F_1\}\nonumber\\&&
+\frac{1}{m}\{ E_1 (\psi)Q F_1-F_1 (\psi)Q E_1 \},
\end{eqnarray}
for all $E_1, \, F_1 \in \mathfrak{X_1}(\mathcal{M})$.
\end{lemma}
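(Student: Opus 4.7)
The plan is to unpack the gradient $m$-quasi Einstein equation into an operator identity for $\nabla D\psi$, and then run the standard curvature commutator argument. Starting from (\ref{a8}), namely $\mathcal{S} + \text{Hess}\,\psi = \frac{1}{m}d\psi \otimes d\psi + \beta g$, and using $\text{Hess}\,\psi(E_1, F_1) = g(\nabla_{E_1}D\psi, F_1)$ together with $E_1\psi = g(D\psi, E_1)$, I would first rewrite the soliton equation as the pointwise relation
\begin{equation*}
\nabla_{E_1} D\psi \;=\; -QE_1 \;+\; \frac{1}{m}(E_1\psi)\, D\psi \;+\; \beta E_1,
\end{equation*}
valid for every $E_1 \in \mathfrak{X}_1(\mathcal{M})$. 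This is the workhorse identity that replaces covariant derivatives of $D\psi$ by algebraic expressions involving $Q$, $D\psi$, and $E_1$ itself.

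Next I would substitute this into the standard curvature formula
\begin{equation*}
R(E_1,F_1) D\psi \;=\; \nabla_{E_1}\nabla_{F_1} D\psi \;-\; \nabla_{F_1}\nabla_{E_1} D\psi \;-\; \nabla_{[E_1,F_1]} D\psi.
\end{equation*}
Differentiating the expression for $\nabla_{F_1}D\psi$ in the direction of $E_1$ (and symmetrically) produces four types of terms: (i) derivatives of the Ricci operator, yielding $-(\nabla_{E_1}Q)F_1 + (\nabla_{F_1}Q)E_1$ plus $Q$-terms with Christoffel pieces; (ii) Hessian-of-$\psi$ terms, which combine into $(E_1 F_1\psi - F_1 E_1\psi - [E_1,F_1]\psi)\,D\psi = 0$; (iii) cross terms obtained by re-applying the workhorse identity to $\nabla_{E_1}D\psi$ inside the $\frac{1}{m}(F_1\psi)\nabla_{E_1}D\psi$ contribution, which generates the desired $\frac{1}{m}\{(E_1\psi)QF_1 - (F_1\psi)QE_1\}$ and $\frac{\beta}{m}\{(F_1\psi)E_1 - (E_1\psi)F_1\}$ terms, together with a symmetric $\frac{1}{m^2}(E_1\psi)(F_1\psi)D\psi$ pair that cancels; and (iv) connection terms like $Q(\nabla_{E_1}F_1 - \nabla_{F_1}E_1)$ and $\beta(\nabla_{E_1}F_1 - \nabla_{F_1}E_1)$ which exactly annihilate the $-\nabla_{[E_1,F_1]}D\psi$ contribution once the same workhorse identity is applied to it.

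Collecting what survives gives precisely
\begin{equation*}
R(E_1,F_1)D\psi \;=\; (\nabla_{F_1}Q)E_1 - (\nabla_{E_1}Q)F_1 + \frac{\beta}{m}\{F_1(\psi)E_1 - E_1(\psi)F_1\} + \frac{1}{m}\{E_1(\psi)QF_1 - F_1(\psi)QE_1\},
\end{equation*}
which is the claim. The calculation is essentially mechanical once the operator form of the soliton equation is in place; the only delicate point, and the step most prone to sign errors, is the bookkeeping that shows the $Q$-with-Christoffel pieces together with the $\beta$-with-Christoffel pieces exactly cancel against $-\nabla_{[E_1,F_1]}D\psi$, and that the two $\frac{1}{m^2}(E_1\psi)(F_1\psi)D\psi$ contributions arising from the double use of the workhorse identity appear with opposite signs. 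Everything else is symmetry. Note that the derivation nowhere uses the constancy of $\mathcal{R}$ or the perfect-fluid form of $\mathcal{S}$, so in fact the identity holds on any semi-Riemannian manifold carrying a gradient $m$-quasi Einstein structure; the $\mathcal{PFS}$/$f(\mathcal{R})$ hypotheses simply fix the context in which the lemma will be applied in the sequel.
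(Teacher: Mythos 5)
Your proposal is correct and follows essentially the same route as the paper: rewrite the gradient $m$-quasi Einstein equation as the operator identity $\nabla_{E_1}D\psi = -QE_1 + \frac{1}{m}(E_1\psi)D\psi + \beta E_1$ and feed it into the curvature commutator $R(E_1,F_1)D\psi = \nabla_{E_1}\nabla_{F_1}D\psi - \nabla_{F_1}\nabla_{E_1}D\psi - \nabla_{[E_1,F_1]}D\psi$, with the stated cancellations. Your closing observation that the identity needs neither the constancy of $\mathcal{R}$ nor the perfect-fluid form of $\mathcal{S}$ is also accurate, and is implicit in the paper's computation.
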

\begin{proof}
Let us assume that the $\mathcal{PFS}$ with constant $\mathcal{R}$ in $f(\mathcal{R})$-gravity admits $m$-quasi Einstein solitons. Then the equation (\ref{a8}) turns into
\begin{equation}\label{k3}
\nabla_{E_1}D \psi+Q E_1=\frac{1}{m}g(E_1,D \psi)D \psi+\beta E_1.
\end{equation}
Covariant derivative of (\ref{k3}) along $F_1$ yields that
\begin{eqnarray}\label{k4}
\nabla_{F_1}\nabla_{E_1}D \psi &=& -\nabla_{F_1}Q E_1+ \frac{1}{m} \nabla_{F_1}g(E_1,D \psi)D \psi\nonumber\\&& +\frac{1}{m} g(E_1,D \psi)\nabla_{F_1}D \psi+ \beta \nabla_{F_1}E_1.
\end{eqnarray}
Exchanging $E_1$ and $F_1$ in (\ref{k4}), we acquire
\begin{eqnarray}\label{k5}
\nabla_{E_1}\nabla_{F_1}D \psi &=& -\nabla_{E_1}Q  F_1+ \frac{1}{m}\nabla_{E_1}g(F_1,D \psi)D \psi\nonumber\\&& +\frac{1}{m} g(F_1,D \psi)\nabla_{E_1}D \psi +\beta \nabla_{E_1}F_1
\end{eqnarray}
and
\begin{equation} \label{k6}
\nabla_{[E_1,F_1]}D \psi = -Q[E_1,F_1]+ \frac{1}{m}g([E_1,F_1],D \psi)D \psi+\beta [E_1,F_1].
\end{equation}
Making use of  (\ref{k3})-(\ref{k6}) and together
with $R(E_1,F_1)D \psi  =\nabla_{E_1}\nabla_{F_1}D \psi-\nabla_{F_1}\nabla_{E_1}D \psi-\nabla_{[E_1,F_1]}D \psi$, we infer
\begin{eqnarray}
R(E_1,F_1)D \psi  &=& (\nabla_{F_1}Q)E_1-(\nabla_{E_1}Q)F_1  +\frac{\beta}{m}\{ F_1 (\psi)E_1-E_1 (\psi)F_1\}\nonumber\\&&
+\frac{1}{m} \{ E_1 (\psi)Q F_1-F_1 (\psi)Q E_1 \}.\nonumber
\end{eqnarray}
\end{proof}
Again, making use of the equation (\ref{b8}) we obtain
\begin{eqnarray}
&&(\nabla_{E_1}Q)(F_1)=E_1 (\alpha_2)F_1\nonumber \\&&
+E_1 (\alpha_1) \mathcal{B}(F_1)\rho+\alpha_1 \{(\nabla_{E_1} \mathcal{B})F_1 \rho+\mathcal{B}(F_1)\nabla_{E_1} \rho\}.
\end{eqnarray}
Utilizing the previous equation, (\ref{b8}) and the above Lemma, we get
\begin{eqnarray}
\label{kk6}
&&R(E_1, F_1)D\psi=-[E_1 (\alpha_2)F_1-F_1 (\alpha_2)E_1 +E_1 (\alpha_1) \mathcal{B}(F_1)\rho-F_1 (\alpha_1) \mathcal{B}(E_1)\rho\nonumber\\&&
+\alpha_1 \{(\nabla_{E_1} \mathcal{B})F_1 \rho+\mathcal{B}(F_1)\nabla_{E_1} \rho-(\nabla_{F_1} \mathcal{B})E_1 \rho-\mathcal{B}(E_1)\nabla_{F_1} \rho\}]\nonumber\\&&
+\frac{\beta}{m}\{F_1 (\psi)E_1-E_1 (\psi)F_1\}\nonumber\\&&
+\frac{1}{m}\{\alpha_1 E_1 (\psi)F_1+\alpha_2E_1 (\psi)\mathcal{B}(F_1)\rho-\alpha_1 F_1 (\psi)E_1-\alpha_2F_1 (\psi)\mathcal{B}(E_1)\rho\}.
\end{eqnarray}
Contracting the foregoing equation gives
\begin{eqnarray}
\label{k8}
S(E_1, D\psi)&=&-[F_1 (\alpha_1)+\rho (\alpha_1) \mathcal{B}(F_1)\nonumber\\&&
-3 F_1 (\alpha_2)+\alpha_1 \{(\nabla_{\rho} \mathcal{B})F_1 -\mathcal{B}(F_1)div \rho\}+\frac{3\beta}{m}E_1 (\psi) \nonumber\\&&
+\frac{1}{m}\{\alpha_1 E_1 (\psi)+\alpha_2\rho (\psi)\mathcal{B}(E_1)-4 \alpha_1 E_1 (\psi)+\alpha_2E_1 (\psi)\}.
\end{eqnarray}
Again, from the equation (\ref{b7}), we infer
\begin{equation}
\label{39.9}
S(F_1, D\psi)=\alpha_1 \mathcal{B}(F_1)\mathcal{B}(D \psi)+\alpha_2 g(F_1, D \psi).
\end{equation}
Replacing $F_1$ by $\rho$ in equations (\ref{k8}) and (\ref{39.9}) and then equating the values of $S(\rho, D\psi)$, we obtain
\begin{equation}
\label{k9}
(\alpha_1-\alpha_2+\frac{3\beta}{m}-\frac{3\alpha_1}{m}) \rho (\psi)=[3 \rho (\alpha_2)+\alpha_1 div \rho].
\end{equation}
If $\psi$ and $\alpha_2$ are invariant under $\rho$ then, from the previous equation, we find that $\alpha_1 =0$ and hence we infer that $\sigma +p=0$.\par
Therefore, we conclude the result as:
\begin{theorem}
\label{thm4.1}
 Let a $\mathcal{PFS}$ with constant $\mathcal{R}$ satisfying $f(\mathcal{R})$-gravity admit a $m$-quasi Einstein soliton of gradient type. If  $\psi$ and $\alpha_2$ are invariant under the velocity vector field $\rho$, then the spacetime represents the dark matter era.
\end{theorem}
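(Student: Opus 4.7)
The plan is to follow the template of the arguments for the gradient $\eta$-Ricci and gradient Einstein cases (Sections 5 and 6), but starting from Lemma \ref{lem1}, whose conclusion is already tailored to the gradient $m$-quasi Einstein setting. The key difference with the Einstein case is the presence of two extra terms on the right-hand side of the curvature identity, namely $\frac{\beta}{m}\{F_1(\psi)E_1 - E_1(\psi)F_1\}$ and $\frac{1}{m}\{E_1(\psi)QF_1 - F_1(\psi)QE_1\}$, which will appear as $1/m$ corrections to the final scalar identity.

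First I would substitute the explicit expression (\ref{b8}) for $Q$ and the formula for $(\nabla_{E_1}Q)F_1$ (computed exactly as in Section 5) into the identity of Lemma \ref{lem1}. Then I would take a trace in $E_1$ to obtain an expression for $S(F_1,D\psi)$ that generalizes (\ref{c3.8}): it reproduces the gradient Einstein right-hand side augmented by the contributions $\frac{3\beta}{m}F_1(\psi)$ and $\frac{1}{m}\{\alpha_1 F_1(\psi)+\alpha_2\rho(\psi)\mathcal{B}(F_1)-4\alpha_1 F_1(\psi)+\alpha_2 F_1(\psi)\}$ arising from the two $1/m$ corrections. Equating this with the direct expression $S(F_1,D\psi)=\alpha_1\mathcal{B}(F_1)\mathcal{B}(D\psi)+\alpha_2 g(F_1,D\psi)$ coming from (\ref{b7}), setting $F_1=\rho$ and using $\mathcal{B}(\rho)=-1$, $\mathcal{B}(D\psi)=-\rho(\psi)$ and $(\nabla_\rho\mathcal{B})\rho=0$, yields the scalar identity
\[
\Bigl(\alpha_1-\alpha_2+\tfrac{3\beta}{m}-\tfrac{3\alpha_1}{m}\Bigr)\rho(\psi)=3\rho(\alpha_2)+\alpha_1\,\mathrm{div}\,\rho.
\]

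Finally, imposing the invariance hypotheses $\rho(\psi)=0$ and $\rho(\alpha_2)=0$ collapses this to $\alpha_1\,\mathrm{div}\,\rho=0$. Taking the branch $\alpha_1=0$ (the alternative $\mathrm{div}\,\rho=0$ only reproduces vanishing vorticity, as in the Case II analysis of Section 6) and unpacking the definition $\alpha_1=\kappa^2(p+\sigma)/f_\mathcal{R}(\mathcal{R})$ forces $p+\sigma=0$, which is precisely the dark matter era. The main obstacle will be the bookkeeping in the tracing step: each $1/m$ correction in Lemma \ref{lem1} generates several terms after expanding $Q$ via (\ref{b8}), and one must verify that on the $F_1=\rho$ slice they assemble cleanly into the single coefficient $(3\beta-3\alpha_1)/m$ in front of $\rho(\psi)$, so that the invariance assumption can eliminate the correction in one stroke and the argument closes exactly as in the gradient Einstein case.
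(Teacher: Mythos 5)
Your proposal is correct and follows essentially the same route as the paper: substitute (\ref{b8}) into Lemma \ref{lem1}, contract, compare with (\ref{b7}) at $F_1=\rho$, and use the invariance hypotheses $\rho(\psi)=\rho(\alpha_2)=0$ to reduce to $\alpha_1\,\mathrm{div}\,\rho=0$, whence $\alpha_1=0$ and $p+\sigma=0$. If anything, you are more careful than the paper, which passes silently from that identity to $\alpha_1=0$ without acknowledging the $\mathrm{div}\,\rho=0$ branch that you explicitly set aside (and which, strictly speaking, would add a ``vanishing vorticity'' alternative to the stated conclusion, as in the preceding two theorems).
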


\section{Conclusion}
Solitons are nothing more than waves in their truest form. Physically, waves propagate with minimum energy loss and preserve their speed and shape after colliding with another wave of the same type. In the treatment of initial-value problems for nonlinear partial differential equations describing wave propagation, solitons play a crucial role. It moreover explain the recurrence in the Fermi-Pasta-Ulam system \cite{fpu}.\par

Different metrics in $\mathcal{PFS}$ obeying $f(\mathcal{R})$-gravity are investigated in this article, namely $\eta$-Ricci solitons, gradient $\eta$-Ricci solitons, gradient Einstein Solitons, and gradient $m$-quasi Einstein solitons. We consider a $\mathcal{PFS}$ that admits $\eta$-Ricci solitons with constant $\mathcal{R}$ obeying $f(\mathcal{R})$-gravity and the 1-form $\eta$ is identical with the 1-form $\mathcal{B}$ of the $\mathcal{PFS}$ , and we obtain the condition for which the soliton is steady, expanding and shrinking, as well as we deduce a Poisson equation. Furthermore, if the spacetime admits gradient $\eta$-Ricci solitons and gradient Einstein Solitons under the identical conditions, then either the spacetime represents the dark matter era or the perfect fluid has vanishing vorticity. Finally, with the same condition, if the $\mathcal{PFS}$ admits a gradient $m$-quasi Einstein solitons, then the spacetime represents the dark matter era.

\section{Declarations}

\subsection{Competing interests}
The authors declare that they have no financial or personal relationships that may be perceived as influencing their work.
\subsection{Funding }
Not applicable.


\begin{thebibliography}{00}

\bibitem{bar1}E. Barbosa, R. Pina and K. Tenenblat, {\it On gradient Ricci solitons conformal to a pseudo-Euclidean space}, Israel J. Math. {\bf 200} (2014),  213-224.
\bibitem{bat} W. Batat, M. Brozos-Vazquez, E. Garcia-Rio and S. Gavino-Fernandez, {\it Ricci soliton on Lorentzian manifolds with large isometry groups}, Bull. Lond. Math. Soc. {\bf  43} (2011),  1219-1227.
\bibitem{br1} A.  Barros and  J. N. Gomes, {\it A compact gradient generalized quasi-Einstein metric with constant scalar curvature}, J. Math. Analysis and Applications, {\bf 401} (2013), 702-705.

\bibitem{blaga2} A. M. Blaga, {\it Solitons and geometrical structures in a perfect fluid spacetime},  Rocky Mountain J. Math. {\bf  50} (2020), 41-53.
\bibitem{bla3} A. M. Blaga, {\it $\eta$-Ricci solitons on Lorentzian para-Sasakian manifolds}, Filomat {\bf30}(2) (2016), 489-496.
\bibitem{bro} M. Brozos-Vazqnez, G. Calvaruso, E. Garcia-Rio and S. Gavino-Fernandez, {\it Three-dimensional Lorentzian homogeneous Ricci solitons}, Israel J. Math. {\bf 188} (2012), 385-403.
\bibitem{hab} H.A. Buchdahl, {\it Non-linear Lagrangians and cosmological theory}, Mon. Not. Roy. Astron. Soc., {\bf150} 1, (1970).
\bibitem{cap}S. Capozziello, V. F. Cardone, V. Salzano, {\it Cosmography of f(R) gravity}, Phys. Rev. D, {\bf78} (2008) 063504.
\bibitem{cap1} S. Capozziello, C.A. Mantica and L.G. Molinari, {\it Cosmological perfect fluids in higher-order gravity,} Gen. Relativ. Gravit. {\bf 52}, 36(2020).
\bibitem{cap2} S. Capozziello, R. D’Agostino and O. Luongo, {\it Extended Gravity Cosmography}, Int.
J. Mod. Phys. D (2019) doi:10.1142/S0218271819300167

\bibitem{cho} Cho, J. T. and Kimura, M., {\it Ricci solitons and real hypersurfaces in a complex space form}, Tohoku Math. J., {\bf{61}}(2009), 205-212.
\bibitem{cat}  Catino, G.,  Mazzieri, L. and  Mongodi, S.,{\it Rigidity of gradient Einstein shrinkers}, Commun.Contemp. Math. {\bf17} (6) (2015), 1550046.
\bibitem{cat1}  Catino, G. and   Mazzieri, L.,  {\it Gradient Einstein solitons}, Nonlinear Anal., {\bf132}(2016), 66-94.
  \bibitem{ade}A. De, T.H. Loo, S. Arora and P.K. Sahoo, {\it Energy condition for a $(WRS)_4$ spacetime in $f(R)$-gravity}, Eur. Phys. J. Plus
https://doi.org/10.1140/epjp/s13360-021-01216-2
\bibitem{ade1}A. De and T.H. Loo, {\it Almost pseudo-Ricci symmetric spacetime solutions in $f(R)$-gravity}, Gen. Relativ. Gravit. {\bf 53}, 5(2021).

\bibitem{kde} K. De and U.C. De, {\it Investigation of generalized $\mathcal{Z}$- recurrent spacetimes and $f(\mathcal{R},T)$-gravity}, Adv. Appl. Clifford Algebras {\textbf 31}, 38 (2021).
\bibitem{kde2} K. De and U.C. De, {\it Investigations on solitons in $f(\mathcal{R})$-gravity},  Eur. Phys. J. Plus (2022) 137:180.
https://doi.org/10.1140/epjp/s13360-022-02399-y
\bibitem{kde1} K. De, U. C. De, A. A. Syied, N. B. Turki and S. Alsaeed, {\it Perfect fluid spacetimes and gradient solitons}, Journal of Nonlinear Mathematical Physics
https://doi.org/10.1007/s44198-022-00066-5
\bibitem{de1} K. De and U.C. De, {\it A note on gradient Solitons on para-Kenmotsu manifolds}, Int. J.Geom. Methods Mod. Phys. {\textbf18}, no. 01, (2021),2150007 (11 pages).
\bibitem{de2}
U. C. De, S. K. Chaubey and Y. J. Suh, A note on almost co-K\"{a}hler manifolds, {\it Int. J. Geom. Methods Mod. Phys.} {\bf 17} (10), 2050153 (14 pages)  (2020).
\bibitem{de} U. C. De, S.K. Chaubey and S. Shenawy, {\it Perfect fluid spacetimes and Yamabe
solitons}, J Math Phys. {\bf 62}, 032501 (2021); https://doi.org/10.1063/5.0033967
\bibitem{des2}  Deshmukh, S., Alodan, H. and Al-Sodais, H., {\it A note on Ricci solitons}, Balkan J. Geom. Appl., {\bf{16}}(2011), 48-55.

\bibitem{ham} R. Hamilton, {\it The Ricci flow on surfaces},  Contemp. Math. {\bf 71}, 237-261 (1988).
\bibitem{he} C. He, P. Petersen and W. Wylie, {\it On the classification of warped product Einstein metrics}, Comm. Anal. Geom. {\bf 20} (2012) 271-312.
\bibitem{hu} Z. Hu, D. Li and J. Xu, {\it On generalized m-quasi-Einstein manifolds with constant scalar curvature}, J. Math. Anal. Appl. 432 (2015), 733-743.
\bibitem{fpu}A. Mussot, A. Kudlinski, M. Droques, P. Szriftgiser and N. Akhmediev, {\it Fermi-Pasta-Ulam recurrence in Nonlinear Fiber Optics: The Role of Reversible and Irreversible Losses}, Physical Review X 4, 011054 (2014). DOI:https://doi.org/10.1103/PhysRevX.4.011054
New York, 1983).
\bibitem{on}K. Onda, {\it Loretz Ricci solitons on 3-dimensional Lie groups}, Geom. Dedicata, {\bf 147} (2010), 313-322.
\bibitem{aas} A. A. Starobinsky, {\it A new type of isotropic cosmological models without singularity}, Phys. Lett. B {\bf91} 1, 99-102 (1980).
\bibitem{ven1}Venkatesha, V., Kumara, H.A., {\it Ricci soliton and geometrical structure in a perfect fluid spacetime with torse-forming vector field}, Africa Math., {\bf{30}}(2019), 725-736.

\bibitem{wan1} Wang, Y., {\it Ricci solitons on almost co-k\"ahler manifolds}, Cand. Math. Bull., {\bf{62}}(2019), 912-922.



\end{thebibliography}
\end{document}